\documentclass[12pt,a4paper]{amsart}
\usepackage{amssymb,enumitem,mathtools}

\usepackage[breaklinks=true,colorlinks=true,linkcolor=blue,citecolor=red,urlcolor=green]{hyperref}


\usepackage[margin=1in,footskip=0.25in]{geometry}

\usepackage{color}

\def\to{\longrightarrow}
\def\mapsto{\longmapsto}

\def\cwedge{\bigcirc\kern-1.07em\wedge\ }

\newcommand \Tr{\operatorname{Tr}}

\theoremstyle{plain}

\newtheorem*{conj*}{Conjecture}
\newtheorem{thm}{Theorem}[section]
\newtheorem{lemma}[thm]{Lemma}

\newtheorem{prop}[thm]{Proposition}

\theoremstyle{remark}
\newtheorem{remark}{Remark}
\newcommand{\bal}{\begin{aligned}}
\newcommand{\eal}{\end{aligned}}

\usepackage{float}

\theoremstyle{definition}
\newtheorem*{defn*}{Definition}

\numberwithin{equation}{section}



\newcommand{\R}{{R}}
\newcommand{\Rm}{{R}}             
\newcommand{\A}{\mathcal{A}}
\newcommand{\B}{\mathcal{B}}
\newcommand{\C}{\mathcal{C}}
\newcommand \cF{\mathcal{F}}

\newcommand{\Scal}{\tau}  
\newcommand{\supp}{\operatorname{supp}}
\newcommand{\vol}{\,dv_g} 

\newcommand{\na}{\nabla}
\newcommand{\Lap}{\Delta}                


\begin{document}	

\title[Critical metrics]{Critical metrics for the quadratic curvature functional on complete four-dimensional manifolds}

\author{Yunhee Euh}
\address{Department of Mathematics, Sungkyunkwan University, Suwon 16419, Korea}
\email{prettyfish@skku.edu\\
https://orcid.org/0000-0002-1425-3688}

\author{JeongHyeong Park}
\address{Department of Mathematics, Sungkyunkwan University, Suwon 16419, Korea}
\email{parkj@skku.edu\\
https://orcid.org/0000-0002-0336-3929}

\subjclass[2020]{53C21, 53C24, 53C25}




\keywords{critical metrics, quadratic curvature functional, complete four-dimensional manifolds}

\begin{abstract}

We study critical metrics of the curvature functional 
$\A(g)=\int_M |R|^2\, \vol$, on complete four-dimensional Riemannian manifolds $(M,g)$ with finite energy, that is, $\A(g)<\infty$. Under the natural inequality condition on the curvature operator of the second kind associated with the trace-free Ricci tensor,
we prove that $(M,g)$ is either Einstein or locally isometric to a Riemannian product of two-dimensional manifolds of constant Gaussian curvatures 
$c$ and $-c$ $(c\ne 0)$. This extends the compact classification of four-dimensional $\mathcal{A}$-critical metrics obtained in earlier work to the complete setting.
\end{abstract}
    
\maketitle


\section{Introduction}

Let $(M,g)$ be a compact, oriented 
$n$-dimensional Riemannian manifold, and let
$\mathfrak{M}=\mathfrak{M}(M)$ denote the space of all Riemannian metrics on $M$.
For $g\in\mathfrak{M}$ we consider the quadratic curvature functionals $\A(g)=\int_M |\R|^{2}\,\vol$, $\B(g)=\int_M |\rho|^{2}\,\vol$, and $C(g)=\int_M \tau^{2}\,\vol,$ where $\R$, $\rho$, and $\tau$ denote the Riemann curvature tensor, the Ricci tensor,
and the scalar curvature of $(M,g)$, respectively. We write $\vol$ for the volume form of $g$.
These functionals play a central role in the variational study of Riemannian curvature:
their Euler-Lagrange equations single out distinguished classes of metrics (such as Einstein
or Bach-flat metrics) and provide natural tools for analyzing the geometric and analytic
properties of the underlying manifold \cite{CF, SW, Vi2016}.

The problem of understanding critical metrics of such curvature functionals has a long history.
In \cite{Muto1974}, Muto studied the quadratic functional $\A(g)$ of the Riemann curvature tensor
and its critical metrics on compact orientable Riemannian manifolds.
Besse~\cite{Besse} devotes the final part of Chapter~4 to the study of the three quadratic curvature
functionals on compact manifolds. In particular, he shows that a critical
metric of the quadratic scalar curvature functional $\C(g)$ is scalar flat in dimensions $n\neq4$,
and is either scalar flat or Einstein when $n=4$.
More recently, Viaclovsky~\cite{Vi2016} studied and surveyed critical metrics for general quadratic
curvature functionals, emphasizing the special role played by dimension four.
In the complete, possibly noncompact setting, Kang~\cite{Kang2007} classified complete, locally
homogeneous four-dimensional Riemannian metrics with finite volume that are critical for the quadratic
functional of the Riemann curvature tensor or the Weyl curvature tensor.
Catino~\cite{Catino} studied complete critical metrics of the quadratic scalar curvature functional $\C(g)$.
He proved that any complete $n$-dimensional Riemannian manifold with positive scalar curvature whose metric is
critical for $\C(g)$ has constant scalar curvature. Moreover, he obtained the following classification in four dimensions: if $(M,g)$ is a complete critical metric for $\C(g)$ and the scalar curvature is nonnegative, then $(M,g)$ is either scalar flat or Einstein with positive scalar curvature.

On the other hand, in our earlier work with Sekigawa~\cite{DGAEPS2011} we studied critical metrics of the quadratic curvature functionals $\A(g)$, $\B(g)$, and $\C(g)$ on compact four-dimensional manifolds. Using a curvature identity deduced from the Chern-Gauss-Bonnet theorem that is specific to four dimensions \cite{EPS13}, we proved a classification theorem for $\A$-critical metrics under appropriate curvature
hypotheses. We recall this result here as Proposition~\ref{prop:DGAEPS}, and it
will be a key ingredient in the present paper.

In this paper we turn to the quadratic functional $\A(g)$ of the Riemann curvature tensor on complete four-dimensional manifolds.
We are interested in complete metrics that are critical for $\A(g)$ with finite energy, that is, $\A(g)<\infty$.
In addition, we impose an inequality condition involving the curvature operator of the second kind evaluated at the trace-free
Ricci tensor:
$G^{ab} R_{a i b j} G^{i j} \ge 0$ on $M$,
where $G=\rho-\frac{\tau}{4}g$ denotes the trace-free Ricci tensor. 
The \emph{curvature operator of the second kind} is the bilinear form $\mathcal{R} : S^2_0(TM)\times S^2_0(TM)\to\mathbb{R}$
defined by $\mathcal R(h,k)=h^{ab}R_{aibj}k^{ij}$ 
on the space $S^2_0(TM)$ of trace-free symmetric two-tensors.
Under these assumptions we study $\A$-critical metrics for the quadratic functional of the curvature tensor on complete four-dimensional manifolds and obtain the following result:

\begin{thm}\label{thm:main}
\emph{Let $(M,g)$ be a complete four-dimensional Riemannian manifold. Assume
that $g$ is an $\A$–critical metric with finite energy and that
$G^{ab} R_{a i b j} G^{i j} \ge 0$
everywhere on $M$, where $G=\rho-\frac{\tau}{4}g$ is the trace-free Ricci tensor.
Then $(M,g)$ is either Einstein or locally isometric to a Riemannian product of two-dimensional  manifolds of constant Gaussian curvatures $c$ and $-c$ ($c\ne0$).}
\end{thm}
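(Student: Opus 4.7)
The plan is to reduce Theorem~\ref{thm:main} to the compact classification in Proposition~\ref{prop:DGAEPS} by promoting the key integral identity of the compact proof to the complete setting, using the finite-energy assumption as a substitute for compactness. The starting point is to revisit the argument of \cite{DGAEPS2011, EPS13}: the Euler-Lagrange equation for $\A(g)$, combined with the specifically four-dimensional curvature identity derived from the Chern--Gauss--Bonnet theorem, should yield (after contraction with $G$) a Bochner-type identity of the schematic form
\[
\tfrac{1}{2}\Delta |G|^{2} \;=\; |\nabla G|^{2} \;+\; c_{1}\, G^{ab}R_{aibj}G^{ij} \;+\; F(R,\tau,G),
\]
where the remaining term $F$ is nonnegative (this is what, together with the sign assumption on $G^{ab}R_{aibj}G^{ij}$, drives the compact case). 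Under the hypothesis $G^{ab}R_{aibj}G^{ij}\ge 0$, this formula exhibits $|G|^{2}$ as essentially subharmonic on $M$ modulo nonnegative lower-order curvature terms.

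To handle the noncompact case I would introduce a family of cutoffs $\eta_{R}\in C_{c}^{\infty}(M)$ with $\eta_{R}\equiv 1$ on the geodesic ball $B_{R/2}(p)$, $\supp \eta_{R}\subset B_{R}(p)$, and $|\nabla\eta_{R}|\le C/R$, multiply the identity above by $\eta_{R}^{2}$, and integrate by parts. The result should be an inequality of the form
\[
\int_{M}\eta_{R}^{2}\bigl(|\nabla G|^{2}+c_{1}G^{ab}R_{aibj}G^{ij}+F\bigr)\vol \;\le\; \frac{C}{R^{2}}\int_{B_{R}(p)\setminus B_{R/2}(p)} |G|^{2}\vol .
\]
Since $|G|^{2}\le |R|^{2}$ pointwise and $\A(g)<\infty$, we have $|G|\in L^{2}(M)$, so the right-hand side tends to zero as $R\to\infty$. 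Letting $R\to\infty$ therefore forces $\nabla G\equiv 0$ on $M$ and, simultaneously, $G^{ab}R_{aibj}G^{ij}\equiv 0$ and $F\equiv 0$.

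With $G$ parallel, either $G\equiv 0$, in which case $(M,g)$ is Einstein, or $|G|$ is a positive constant. In the latter case every pointwise curvature identity that the compact argument extracts from the $\A$-critical equation and from the four-dimensional identity of \cite{EPS13} continues to hold on $M$; these identities are local and algebraic, so the local/algebraic part of the proof of Proposition~\ref{prop:DGAEPS} now classifies the parallel tensor $G$ at each point of $M$. This produces a de~Rham-type local splitting of $TM$ into two parallel two-dimensional distributions along which $G$ acts as $\pm\tfrac{\tau}{4}\,\id$, and forces the corresponding Gaussian curvatures to be constants $c$ and $-c$ with $c\ne 0$.

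The main obstacle I expect is the cutoff integration-by-parts step, because the $\A$-critical equation is fourth order: in passing to the Bochner identity for $|G|^{2}$ one produces terms involving $\Delta G$, $\nabla^{2} R$, and $\nabla R$. The delicate point is to organize the integration by parts so that every boundary-error term involves only $\int_{B_{R}\setminus B_{R/2}} |R|^{2}\vol$ (controlled by finite energy) multiplied by a negative power of $R$ coming from $|\nabla \eta_{R}|$, without ever requiring global $L^{2}$-bounds on $\nabla R$ or $\nabla^{2} R$. Choosing the right test function (for example $\eta_{R}^{2}$ times a component of $G$ rather than $\eta_{R}^{2}$ directly), and moving derivatives off $R$ and onto the cutoff in the correct order, is the technical heart of the argument.
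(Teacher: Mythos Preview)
Your overall strategy—cutoff functions, integration by parts against a Bochner-type identity, and finite energy to kill the boundary terms—matches the paper exactly, and your endgame (apply Proposition~\ref{prop:DGAEPS} once $\nabla G=0$) is also the paper's. But there is a real gap in the middle.

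You posit a schematic identity
\[
\tfrac{1}{2}\Delta |G|^{2} \;=\; |\nabla G|^{2} + c_{1}\, G^{ab}R_{aibj}G^{ij} + F,
\]
and assert that $F\ge 0$ ``is what drives the compact case.'' In the compact case that is vacuously true only because $\tau$ is harmonic on a closed manifold and hence \emph{constant}; the actual identity (equation~\eqref{eq:key} in the paper) contains the term $2\rho^{ij}\nabla_i\nabla_j\tau$, which has no sign in general. In the complete setting you cannot simply declare $F\ge 0$; you must first prove $\tau$ is constant, and this is exactly the step your proposal omits. The paper handles it as a separate lemma (Lemma~\ref{lem:3.1}): multiply the trace equation $\Delta\tau=0$ by $\eta_s^{2}\tau$, integrate by parts, and use $\tau\in L^{2}(M)$ (which follows from $\A(g)<\infty$ via Lemma~\ref{eq:finite}) to conclude $\nabla\tau\equiv 0$. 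Once $\tau$ is constant, the Hessian term in \eqref{eq:key} vanishes identically, and the cutoff argument for $|\rho|^{2}$ (or $|G|^{2}$) goes through exactly as you describe, with boundary errors controlled by $\int_M|\rho|^{2}<\infty$.

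This two-step organization also dissolves the obstacle you flag at the end: the fourth-order terms and the need to bound $\nabla R$ or $\nabla^{2}R$ never arise, because \eqref{eq:key} is a \emph{pointwise} identity already derived in \cite{DGAEPS2011}, and after killing the $\nabla^{2}\tau$ term the only derivatives left are $\Delta|\rho|^{2}$ and $|\nabla\rho|^{2}$, for which a single integration by parts against $\eta_s^{2}$ suffices.
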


\begin{remark}
This theorem extends, via Proposition~\ref{prop:DGAEPS} in Section~\ref{s:prel}, the compact classification obtained in~\cite{DGAEPS2011}
to the complete setting, showing that the classification of four-dimensional $\A$-critical metrics established in the compact case
continues to hold for complete metrics with finite energy. It may also be viewed as the complete analogue, for the quadratic curvature
functional $\A(g)$, of Catino's four-dimensional classification for the quadratic scalar curvature functional $\C(g)$~\cite{Catino}.
\end{remark}

A key difficulty in generalizing from the compact theory to the complete setting
is that global integration-by-parts arguments are no longer automatic: one must control the boundary
terms at infinity arising from the divergence theorem.
The main contribution of the present paper is to show that, for $\A$-critical metrics in dimension four,
the finite-energy condition $\A(g)=\int_M |R|^2\,dv_g<\infty$ provides exactly the analytic mechanism
needed to justify these integrations via a cutoff procedure in the spirit of Gaffney-Yau \cite{Gaffney1954, Yau1976}.
More precisely, combining the trace equation $\Delta\tau=0$ and the Bochner-type identity for the Ricci
tensor with carefully chosen cutoff functions, we prove that the scalar curvature $\tau$ is constant and that the Ricci tensor
is parallel (hence $|\rho|^2$ is constant) under the curvature-operator condition
$G^{ab}R_{aibj}G^{ij}\ge 0$.
Once these global rigidity consequences are established in the complete setting, the known local
classification theorem from the compact case applies and yields the following conclusion:
either $g$ is Einstein, or $(M,g)$ is locally isometric to a Riemannian product of two-dimensional
manifolds with constant Gaussian curvatures $c$ and $-c$ $(c\neq 0)$.

\medskip

The paper is organized as follows. In Section~\ref{s:prel} we recall basic notation and preliminaries on Riemannian curvature and quadratic
curvature functionals, including Berger's formulas for the gradients of $\A(g)$, $\B(g)$, and $\C(g)$. We also state the curvature identity and the classification theorem for $\A$-critical metrics obtained in~\cite{DGAEPS2011,EPS13}. Section~\ref{s.proof} is devoted to the proof of Theorem~\ref{thm:main}. We combine the Euler-Lagrange equation for $\A(g)$ with integration by parts on complete manifolds, following
Gaffney~\cite{Gaffney1954} and Yau~\cite{Yau1976}, and use cutoff functions as in Petersen-Wei~\cite{PetersenWei1997} together with the curvature
identity from~\cite{EPS13} to prove that the scalar curvature and the squared norm of the Ricci tensor are constant; the classification result
from~\cite[Theorem~2.7]{DGAEPS2011} then completes the proof. In Section~\ref{s:lowdim} we treat the cases { {$n=2$ and $n = 3$.}} Exploiting the trace equation
for $\A$-critical metrics and a cutoff-function integration-by-parts argument under the finite-volume and finite-energy assumptions, we prove
that any complete $\A$-critical metric is flat. We also briefly compare this result with related rigidity theorems in the literature.

A central feature of our approach is that we work on complete manifolds without assuming compactness,
and we focus on the quadratic functional of the Riemann curvature tensor $\A(g)=\int_M |R|^2\,dv_g$.
In dimension four, we show that the compact rigidity phenomenon for $\A$ persists in the complete
finite-energy setting under the natural curvature-operator nonnegativity condition
$G^{ab}R_{aibj}G^{ij}\ge 0$.

\section{Preliminaries}
\label{s:prel}

Let $M$ be a smooth, oriented manifold of dimension $n$, and let
$\mathfrak{M}=\mathfrak{M}(M)$ denote the space of all Riemannian metrics on $M$.
For $g\in\mathfrak{M}$, we write $\nabla$ and $\vol$ for the Levi-Civita connection
and the volume form of $g$, respectively. All norms and traces are taken with respect to $g$.
For smooth vector fields $X,Y,Z$ on $M$, the Riemann curvature tensor is defined by $\R(X,Y)Z \,=\,[\nabla_X,\nabla_Y]Z-\nabla_{[X,Y]}Z,$ and the $(0,4)$-type curvature tensor is 
$\R(X,Y,Z,W) \,=\, g\,\!\big(\R(X,Y)Z,\,W\big).$
The Ricci tensor $\rho$ and scalar curvature $\tau$ are defined by 
\begin{equation*}
\rho(X,Y) \,=\, \Tr\!\big(Z \mapsto R(Z,X)Y\big),
\quad
\tau \,=\, \Tr Q,
\end{equation*}
where $Q$ is the Ricci operator characterized by $g(QX,Y)=\rho(X,Y)$. For any $(0,k)$-tensor $T$, we write $|T|^2$ for its pointwise norm induced by $g$.
In local coordinates, for instance,
$
|R|^2 = R_{ijkl}R^{ijkl},
|\rho|^2 = \rho_{ij}\rho^{ij}
$
with indices raised using $g^{ij}$. The quadratic functionals of the Riemann curvature tensor, the Ricci tensor,
and the scalar curvature are defined by
\begin{equation*}
\A(g) \;=\; \int_M |\R|^2\, dv_g,\qquad
\B(g) \;=\; \int_M |\rho|^2\, dv_g,\qquad
\C(g) \;=\; \int_M \tau^2\, dv_g
\end{equation*}
on $\mathfrak{M}(M)$, respectively. A metric $g$ is called a \emph{critical metric} of a functional $\mathcal{F}(g)$ if, for every
smooth symmetric $2$-tensor $h$ with compact support (when $M$ is noncompact),
$$
\left.\frac{d}{dt}\right|_{t=0}\mathcal F(g+t h)=0.
$$
Equivalently, the gradient of $\mathcal{F}(g)$ vanishes identically:
$\na{\mathcal F}=0$  on  $M$, that is, $g$ satisfies the Euler–Lagrange equation associated with the functional $\cF(g)$. We say that $g$ is a \emph{complete critical metric} for $\mathcal{F}(g)$ if $(M,g)$ is complete and
$\nabla\mathcal{F}=0$. If, in addition, $\mathcal{F}(g)<\infty$, we say that $g$ is a
\emph{complete critical metric with finite energy} for $\mathcal{F}(g)$. From Berger's results \cite[Sections~5-6]{Beg}, the gradients $\nabla\A$, $\nabla\B$, and $\nabla\C$
are given by
\begin{equation}\label{eq:EL-ABC}
\begin{aligned}
(\nabla \A)_{ij}
&= -2\,\R_{i}{}^{abc} \R_{jabc}
   - 4\,\na^a\na_a \rho_{ij}
   + 2\,\na_i\na_j\tau
    + 4\,\rho_{ia} \rho_{j}{}^{a}
   - 4\,\rho^{ab} \R_{iabj}
   + \tfrac12 \lvert \mathrm{R}\rvert^{2} g_{ij},\\[3mm]
(\nabla \B)_{ij}
&=  - 2\,\rho^{ab} \R_{iabj}
    -\,\na^a\na_a\rho_{ij}
   + \na_i\na_j\tau
   - \tfrac12 (\Delta \tau)\, g_{ij}
   + \tfrac12 \lvert {\rho}\rvert^{2} g_{ij},\\[3mm]
(\nabla \C)_{ij}
&= - 2\,(\Delta \tau)\, g_{ij}
    +2\,\na_i\na_j\tau
   - 2\,\tau\,\rho_{ij}
   + \tfrac12 \tau^{2} g_{ij}.
\end{aligned}
\end{equation}
We say that a metric $g$ is a \emph{critical metric with finite energy}
for $\A(g)$ (resp. $\B(g)$, $\C(g)$) if the corresponding  gradient
in \eqref{eq:EL-ABC} vanishes and
$$
\int_M |\Rm|^2\, \vol < \infty
\quad (\text{resp. } \int_M |\rho|^2\, \vol < \infty,\ 
              \int_M \tau^2\, \vol < \infty).
$$
Taking the trace of $\nabla\C$ yields
\begin{equation*}
\Tr(\nabla\mathcal C)
= -2(n-1)\,\Delta\tau+\Bigl(\frac n2-2\Bigr)\tau^2=0,
\end{equation*}
and hence $\Lap\tau=\frac{n-4}{4(n-1)}\tau^2$. This leads to the following well-known result:

\begin{prop}\textup{\cite[p.~133]{Besse}}
Let $(M,g)$ be a compact, oriented $n$-dimensional Riemannian manifold. Then the critical metric for $\C(g)$ is scalar flat if $n\ne4$, whereas it is either scalar flat or Einstein if $n=4$. 
\end{prop}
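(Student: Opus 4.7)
The plan is to exploit the trace equation $\Delta\tau=\frac{n-4}{4(n-1)}\tau^2$ that was just derived, together with compactness of $M$, and then to feed the resulting information about $\tau$ back into the full Euler-Lagrange equation $(\nabla\C)_{ij}=0$.

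First I would dispose of the case $n\neq 4$. Since $M$ is compact and oriented, the divergence theorem gives $\int_M \Delta\tau\,\vol=0$. Integrating the trace equation therefore yields $\frac{n-4}{4(n-1)}\int_M \tau^2\,\vol=0$, and because the coefficient is nonzero for $n\neq 4$ this forces $\tau\equiv 0$. No further input from the full Euler-Lagrange system is needed in this range of dimensions.

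For $n=4$ the trace equation degenerates to $\Delta\tau=0$. On a compact manifold, every harmonic function is constant, so $\tau$ is a constant and in particular $\nabla_i\nabla_j\tau=0$. Substituting this back into the third line of \eqref{eq:EL-ABC} collapses $(\nabla\C)_{ij}=0$ to the pointwise identity
\begin{equation*}
-2\tau\,\rho_{ij}+\tfrac12\tau^{2}g_{ij}=0,
\end{equation*}
that is, $\tau\bigl(\rho_{ij}-\tfrac{\tau}{4}g_{ij}\bigr)=0$. This dichotomy gives the stated conclusion: either $\tau=0$ and $g$ is scalar flat, or $\tau\neq 0$ and $\rho_{ij}=\tfrac{\tau}{4}g_{ij}$, making $g$ Einstein.

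There is essentially no obstacle here; the whole proof is driven by the algebraic structure of the Euler-Lagrange equation together with a single application of the divergence theorem. The only mildly delicate point is the reduction in dimension four, where one has to notice that once $\tau$ is known to be constant, both the Hessian term and the Laplacian term in $(\nabla\C)_{ij}$ vanish simultaneously, leaving a trivially factorable equation. This is in sharp contrast to the complete, noncompact setting treated in the rest of the paper, where the divergence theorem is unavailable and one must invoke a cutoff-function argument justified by the finite-energy hypothesis.
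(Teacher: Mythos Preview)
Your proof is correct and is precisely the standard argument; the paper itself does not spell out a proof beyond deriving the trace equation $\Delta\tau=\frac{n-4}{4(n-1)}\tau^2$ and then citing Besse for the proposition. Your treatment of both cases---integrating the trace identity for $n\neq4$ and, for $n=4$, using harmonicity of $\tau$ on a compact manifold to reduce $(\nabla\C)_{ij}=0$ to the factored equation $\tau(\rho_{ij}-\tfrac{\tau}{4}g_{ij})=0$---is exactly the intended completion.
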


Catino~\cite{Catino} studied complete (possibly noncompact) critical metrics for $\C(g)$ and proved the
following characterization under a positivity assumption on the scalar curvature.

\begin{prop}\label{Catino-Thm}\textup{\cite[Theorem~1.1]{Catino}}
Let $(M,g)$ be a complete $n$-dimensional Riemannian manifold with $n\ge3$.
If $g$ is critical for $\C(g)$ and the scalar curvature is positive, then $(M,g)$ has constant scalar curvature.
\end{prop}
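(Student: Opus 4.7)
The plan is to reduce the Euler-Lagrange equation $\na\C=0$ to a scalar PDE for $\tau$ alone and then force $\tau$ to be constant by a maximum-principle-at-infinity argument adapted to the complete setting. The trace of $(\na\C)_{ij}=0$, as the excerpt already records, gives
\[
\Lap\tau=\frac{n-4}{4(n-1)}\,\tau^{2}.
\]
Inserting this identity back into the full Euler-Lagrange equation solves for the Hessian of $\tau$:
\[
\na_{i}\na_{j}\tau=\tau\,\rho_{ij}-\frac{3\tau^{2}}{4(n-1)}\,g_{ij}.
\]
Since $\tau>0$ one can divide and express the Ricci tensor entirely in terms of $\tau$ and its Hessian, which is a static / quasi-Einstein-type relation. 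Differentiating the trace equation yields the first-order consequence $\na\Lap\tau=\tfrac{n-4}{2(n-1)}\,\tau\,\na\tau$.

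Next, I apply the Bochner formula to $\tau$ and substitute both the Hessian identity and the formula for $\na\Lap\tau$ above to eliminate the $\rho(\na\tau,\na\tau)$ and $|\na^{2}\tau|^{2}$ terms in favor of $\tau$, $\na\tau$, and $|\rho|^{2}$. The resulting closed second-order equation for $|\na\tau|^{2}$ is driven by the nonlinear right-hand side $c_{n}\tau^{2}$, where $c_{n}:=\tfrac{n-4}{4(n-1)}$. The sign of $c_{n}$ determines the character of $\tau$: subharmonic for $n\ge 5$, harmonic for $n=4$, and superharmonic for $n=3$. In each regime the nonlinear structure $\Lap\tau\propto\tau^{2}$ couples $|\na\tau|^{2}$ and $\tau$ tightly enough to be attacked by Omori-Yau, Cheng-Yau gradient, or Keller-Osserman-type arguments.

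The final step is a cutoff integration-by-parts argument in the spirit of Gaffney-Yau. Using Petersen-Wei cutoffs $\eta_{R}$ supported in a geodesic ball $B_{2R}(p)$ with $|\na\eta_{R}|\le C/R$, one multiplies the PDE (or the Bochner equation) by a test function adapted to the sign of $c_{n}$ and to the positivity of $\tau$ -- for example $\eta_{R}^{2}\tau^{-1}$ or $\eta_{R}^{2}\log\tau$ -- integrates by parts, and lets $R\to\infty$ to force $|\na\tau|\equiv 0$, whence $\tau$ is constant. I expect the hard part to be controlling the boundary gradient terms from $|\na\eta_{R}|\sim 1/R$ on a complete manifold without a priori volume or curvature hypotheses. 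The positivity of $\tau$ is crucial here: it makes $\tau^{-1}$ and $\log\tau$ admissible test functions, and, together with the quadratic term $c_{n}\tau^{2}$, supplies the coercivity needed to absorb the cutoff errors in the limit.
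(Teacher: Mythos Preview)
The paper does not prove this proposition; it is quoted from \cite{Catino} as background, so there is no in-paper argument to compare against.

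Judged on its own, your proposal is a strategy outline rather than a proof. The derivations of the trace equation $\Delta\tau=\tfrac{n-4}{4(n-1)}\tau^{2}$ and of the Hessian identity $\nabla_{i}\nabla_{j}\tau=\tau\rho_{ij}-\tfrac{3\tau^{2}}{4(n-1)}g_{ij}$ are correct, and feeding the latter into the Bochner formula is indeed the right move. But from that point on you only \emph{name} possible mechanisms (Omori--Yau, Cheng--Yau, Keller--Osserman, Gaffney--Yau cutoffs) without carrying any of them through, and you explicitly label the decisive step---controlling the boundary terms as $R\to\infty$ with no volume, energy, or curvature hypothesis---as ``the hard part'' still to be done. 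That step \emph{is} the theorem. Each tool you list normally requires an assumption you do not have: Omori--Yau and Cheng--Yau need a Ricci lower bound; multiplying $\Delta\tau=c_{n}\tau^{2}$ by $\eta_{R}^{2}\tau^{-1}$ and integrating leaves error terms of order $\int|\nabla\eta_{R}|^{2}\lesssim R^{-2}\operatorname{Vol}(B_{2R})$, which need not vanish. The positivity of $\tau$ by itself, combined only with the scalar equation, does not generate the coercivity you assert; Catino's actual argument makes essential use of the full Hessian relation (so that $\rho$ is expressed through $\tau$) to manufacture a differential inequality whose integration-by-parts closes without auxiliary bounds. Until you exhibit that closed estimate explicitly and show the cutoff errors vanish, the proposal has a genuine gap at exactly the point where the substance of the result lies.
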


In four dimensions he obtained the following classification for complete critical metrics for $\C(g)$
under nonnegativity of the scalar curvature.

\begin{prop}\textup{\cite[Theorem~1.2]{Catino}}
Let $(M,g)$ be a complete four-dimensional Riemannian manifold. If $g$ is critical for $\C(g)$
and the scalar curvature is nonnegative, then $(M,g)$ is either scalar flat or Einstein with positive scalar curvature.
\end{prop}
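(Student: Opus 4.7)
The plan is to reduce the statement to the positive-scalar-curvature case already handled by Proposition~\ref{Catino-Thm}, using the strong maximum principle for harmonic functions to treat possible zeros of $\tau$, and then to extract the Einstein condition directly from the pointwise Euler-Lagrange equation for $\C(g)$.

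First I would invoke the trace identity for $\nabla\C$ already derived in the preliminaries: in dimension $n=4$ the coefficient $(n/2-2)$ of $\tau^2$ vanishes and the trace of $(\nabla\C)_{ij}=0$ reduces to $-6\,\Delta\tau=0$. Thus $\tau$ is a nonnegative \emph{harmonic} function on the complete connected manifold $(M,g)$.

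I would then split into two cases. If $\tau(p)=0$ at some $p\in M$, then the nonnegative harmonic function $\tau$ attains its infimum at $p$, and the strong maximum principle for harmonic functions forces $\tau\equiv 0$, so $(M,g)$ is scalar flat. Otherwise $\tau>0$ everywhere on $M$, and Proposition~\ref{Catino-Thm} applies (its hypothesis asks only for pointwise positivity, not for any uniform positive lower bound) to give that $\tau\equiv\tau_0$ is a positive constant.

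In the latter case I would substitute $\nabla\tau\equiv 0$ and $\Delta\tau\equiv 0$ back into the full tensor equation from~\eqref{eq:EL-ABC}: all derivative terms drop out and the identity collapses to $-2\tau_0\rho_{ij}+\tfrac12\tau_0^2\,g_{ij}=0$, i.e., $\rho_{ij}=\tfrac{\tau_0}{4}\,g_{ij}$, so $g$ is Einstein with positive scalar curvature. The only delicate point in the argument is the subcase where $\tau>0$ pointwise yet $\inf_M\tau=0$, which cannot be handled by the strong maximum principle alone on a noncompact manifold; this is precisely what Proposition~\ref{Catino-Thm} is designed to rule out, so the present proof can invoke it as a black box and the remaining work is purely algebraic.
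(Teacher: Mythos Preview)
The paper does not supply a proof of this proposition; it is quoted verbatim from Catino~\cite[Theorem~1.2]{Catino} as background, so there is no in-paper argument to compare against. Your proposal is nonetheless a correct and complete derivation, and it follows exactly the route one expects Catino's own paper to take: harmonicity of $\tau$ in dimension four from the trace of $\nabla\C$, the strong maximum principle to force $\tau\equiv0$ as soon as $\tau$ touches zero, the black-box appeal to Proposition~\ref{Catino-Thm} to obtain constancy of $\tau$ in the strictly positive case, and the algebraic reduction of the full Euler--Lagrange equation to $\rho=\tfrac{\tau_0}{4}g$ once all derivatives of $\tau$ vanish. Each step is sound, and you have correctly identified that the only nontrivial analytic content lies inside Proposition~\ref{Catino-Thm}.
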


In \cite{DGAEPS2011}, we studied critical metrics of the quadratic curvature functionals
$\A(g)$, $\B(g)$, and $\C(g)$ by making use of a curvature identity on four-dimensional Riemannian manifolds
\cite{EPS13}. The results below summarize those obtained in \cite{DGAEPS2011} that are related to the functional $\A(g)$.
We first recall an identity that may be viewed as a Bochner-type formula for the Ricci tensor and involves
the curvature operator of the second kind evaluated at the trace-free Ricci tensor
$G=\rho-\tfrac{\Scal}{4}g$.

Let $(M^n,g)$ be a complete $n$-dimensional Riemannian manifold. We say that a
metric $g\in\mathfrak{M}$ is $\A$-critical with finite energy if and only if $\nabla\A=0$ and $\A(g)<\infty$. 
Taking the trace of the first equation in \eqref{eq:EL-ABC} yields
\begin{equation*}
    0=\Tr(\nabla \A)
=\Big(\frac{n}{2}-2\Big)\,|R|^2-2\,\Delta\tau,
\end{equation*}
and hence 
\begin{equation}\label{Lap}
\Delta\tau=\frac{n-4}{4}\,|R|^2.
\end{equation}
In particular, if $M$ has
dimension four, then 
\begin{equation}\label{eq:tau-harmonic}
 \Delta\tau=0.
\end{equation}
The following identity, which relates $\nabla\rho$ to the curvature term $G^{ab}R_{aibj}G^{ij}$,
plays an important role in the proof of the main theorem, and  can be regarded as a Bochner-type formula
for the Ricci tensor involving the curvature operator of the second kind.

\begin{lemma}\textup{\cite[p.~645]{DGAEPS2011}}
Let $g$ be an $\A$-critical metric on a four-dimensional smooth manifold $M$. Then
\begin{equation}\label{eq:key}
2\rho^{ij}\na_i\na_j\tau
-2\Lap |\rho|^2
+4|\na\rho|^2
+8G^{ab}R_{aibj}G^{ij}=0,
\end{equation}where $G$ denotes the trace-free Ricci tensor.
\end{lemma}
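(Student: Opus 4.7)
The plan is to derive~\eqref{eq:key} as a direct scalar consequence of the Euler--Lagrange equation $\nabla\mathcal{A}=0$, combined with the standard Bochner identity for the Ricci tensor and the pointwise four-dimensional curvature identity of~\cite{EPS13}. First, I would start from the first line of~\eqref{eq:EL-ABC} and contract it with $\rho^{ij}$, producing the scalar equation
\[
-2\rho^{ij}R_i{}^{abc}R_{jabc}-4\rho^{ij}\na^a\na_a\rho_{ij}+2\rho^{ij}\na_i\na_j\tau+4\rho^{ij}\rho_{ia}\rho_j{}^a-4\rho^{ij}\rho^{ab}R_{iabj}+\tfrac12\tau|R|^2=0,
\]
in which the target term $2\rho^{ij}\na_i\na_j\tau$ already appears with the correct coefficient.

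Next, I would eliminate the Laplacian-of-Ricci term by invoking the standard pointwise Bochner identity
\[
\Lap|\rho|^2=2\rho^{ij}\Lap\rho_{ij}+2|\na\rho|^2,
\]
which converts $-4\rho^{ij}\na^a\na_a\rho_{ij}$ into $-2\Lap|\rho|^2+4|\na\rho|^2$. After this substitution, three of the four terms appearing in~\eqref{eq:key} are in place with the correct coefficients, and the problem reduces to identifying the remaining algebraic combination of quadratic-curvature scalars with $8G^{ab}R_{aibj}G^{ij}$. Using $G=\rho-\tfrac{\tau}{4}g$ together with the basic traces $g^{ij}R_{iabj}=\rho_{ab}$ and $g^{ab}\rho_{ab}=\tau$, I would expand $8G^{ab}R_{aibj}G^{ij}$ as a polynomial in $\rho^{ab}R_{iabj}\rho^{ij}$, $\tau|\rho|^2$, and $\tau^3$, reducing the task to a closed algebraic identity among quadratic-curvature invariants.

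The final step is to close that algebraic identity, and this is the principal obstacle. After combining everything, the residual expression is a specific scalar involving $\rho^{ij}R_i{}^{abc}R_{jabc}$, $\rho^{ij}\rho_{ia}\rho_j{}^a$, $\tau\rho^{ij}\rho_{ij}$, and $\tau|R|^2$, whose vanishing is not a general Riemannian identity. To eliminate it I would invoke the pointwise four-dimensional curvature identity of Euh--Park--Sekigawa~\cite{EPS13}, which expresses the vanishing of a certain symmetric two-tensor built from quadratic curvature invariants as a consequence of the Chern--Gauss--Bonnet integrand being an Euler representative in dimension four. Contracting this identity with $g^{ij}$ (or with $\rho^{ij}$) supplies precisely the scalar relation needed to cancel the residual terms and to leave exactly $8G^{ab}R_{aibj}G^{ij}$. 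Tracking signs carefully through the Riemann-tensor symmetries $R_{ijkl}=-R_{jikl}=R_{klij}$ and verifying that the factor of $8$ emerges correctly from the two cross-terms in the expansion of $G^{ab}R_{aibj}G^{ij}$ is the main bookkeeping challenge; once the EPS13 identity is applied in the right contraction, the remaining computation is routine.
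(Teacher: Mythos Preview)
The paper does not supply its own proof of this lemma; it is quoted verbatim from \cite[p.~645]{DGAEPS2011}. Your outline---contract $(\nabla\A)_{ij}=0$ with $\rho^{ij}$, replace $-4\rho^{ij}\Lap\rho_{ij}$ by $-2\Lap|\rho|^2+4|\nabla\rho|^2$ via the Bochner identity, and then close the residual algebraic relation among $\rho^{ij}R_i{}^{abc}R_{jabc}$, $\rho^{ij}\rho_{ia}\rho_j{}^a$, $\rho^{ij}\rho^{ab}R_{iabj}$, and $\tau|R|^2$ by contracting the four-dimensional curvature identity of \cite{EPS13} with $\rho$---is precisely the derivation carried out in that reference, so your approach is correct and coincides with the original.
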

The following two propositions recall the classification results from \cite{DGAEPS2011}
for four-dimensional $\A$-critical metrics under the curvature condition
$G^{ab}R_{aibj}G^{ij}\ge0$.

\begin{prop}\label{prop:DGAEPS}\textup{\cite[Theorem~2.7]{DGAEPS2011}}
Let $g$ be an $\A$-critical metric on a four-dimensional smooth manifold $M$. If $g$ satisfies that
\begin{enumerate}[label=\textup{(\alph*)}]
\item the scalar curvature is constant,
\item the squared norm of the Ricci tensor is constant, and
\item the inequality $G^{ab}R_{aibj}G^{ij}\ge 0$ holds,
\end{enumerate}
then $(M,g)$ is either Einstein or locally isometric to a Riemannian  product of two-dimensional manifolds
of constant Gaussian curvatures $c$ and $-c$ $(c\ne0)$.
\end{prop}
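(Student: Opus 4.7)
The strategy is to feed hypotheses (a) and (b) into the Bochner-type identity~\eqref{eq:key} to force the Ricci tensor to be parallel, then apply the local de Rham decomposition to reduce to a short list of explicit product models, and finally rule out the unwanted possibilities by a direct computation of $G^{ab}R_{aibj}G^{ij}$ in a product-adapted orthonormal frame.

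First, since $\tau$ is constant and $|\rho|^2$ is constant, the terms $\rho^{ij}\na_i\na_j\tau$ and $\Lap|\rho|^2$ in \eqref{eq:key} both vanish, and the identity collapses to
\begin{equation*}
|\na\rho|^2 \,+\, 2\,G^{ab}R_{aibj}G^{ij} \,=\, 0 \quad\text{on } M.
\end{equation*}
Combined with hypothesis (c) and the manifest nonnegativity of $|\na\rho|^2$, this forces $\na\rho\equiv 0$ (Ricci is parallel) and $G^{ab}R_{aibj}G^{ij}\equiv 0$ pointwise on $M$. A parallel Ricci tensor has constant eigenvalues, and its eigenspaces form parallel, and hence integrable, distributions; the local de Rham decomposition theorem then splits $M$ isometrically, around each point, as a Riemannian product of Einstein factors whose Einstein constants are the distinct Ricci eigenvalues. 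If only one eigenvalue occurs, $g$ is Einstein and we are done. Otherwise the only non-Einstein dimension patterns in dimension four are $(3,1)$ and $(2,2)$; any one-dimensional factor is automatically flat, so a $(2,1,1)$ pattern reduces, at the level of Ricci eigenvalues, to the $(2,2)$ case with one 2-factor flat.

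Next, in each remaining case I would compute $G^{ab}R_{aibj}G^{ij}$ directly in an orthonormal frame adapted to the product, using that $G$ is diagonal in such a frame and that $R_{aibj}$ vanishes whenever the indices mix the two factors. For the $(3,1)$ split with three-dimensional Einstein factor of constant sectional curvature $k$, the calculation yields
\begin{equation*}
G^{ab}R_{aibj}G^{ij}\,=\,\tfrac{3\,k^{3}}{2},
\end{equation*}
so vanishing forces $k=0$ and $(M,g)$ is flat, hence Einstein. For the $(2,2)$ split with Gauss curvatures $c_1\ne c_2$, one obtains
\begin{equation*}
G^{ab}R_{aibj}G^{ij}\,=\,\tfrac12\,(c_1-c_2)^{2}\,(c_1+c_2),
\end{equation*}
so the non-Einstein condition $c_1\ne c_2$ together with vanishing forces $c_2=-c_1\ne 0$, which is precisely the product structure in the statement.

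The main obstacle in this scheme is the de Rham step: one must justify that a parallel Ricci tensor in dimension four produces a genuine local isometric product decomposition into Einstein factors, rather than only an algebraic splitting of the tangent bundle into Ricci eigenspaces. Once this local splitting is secured, the remainder of the argument is the short frame computation above, and the trichotomy (Einstein, flat, or product of surfaces with curvatures $\pm c$) gives exactly the conclusion of the proposition.
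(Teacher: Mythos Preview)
The paper does not prove this proposition; it is quoted from \cite[Theorem~2.7]{DGAEPS2011} and used as a black box in the proof of Theorem~\ref{thm:main}. So there is no in-paper argument to compare against.

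Your reconstruction is essentially correct and is the natural route. Substituting (a) and (b) into \eqref{eq:key} correctly collapses it to $|\nabla\rho|^{2}+2\,G^{ab}R_{aibj}G^{ij}=0$, and (c) then forces both summands to vanish. The de~Rham step you flag as the ``main obstacle'' is in fact standard: a parallel symmetric endomorphism has constant eigenvalues and holonomy-invariant eigenbundles, so the local de~Rham theorem applies directly and each factor is Einstein with Einstein constant equal to the corresponding Ricci eigenvalue. Your case analysis then finishes the job. Two small remarks: first, the $(2,1,1)$ and $(1,1,1,1)$ eigenvalue patterns are actually \emph{impossible} rather than merely reducible, since one-dimensional factors force the corresponding Ricci eigenvalues to be zero and hence collapse the supposedly distinct eigenvalues; second, in the paper's curvature convention one has $R_{aiai}=-K(e_a,e_i)$ for $a\neq i$, so your explicit values for $G^{ab}R_{aibj}G^{ij}$ in the $(3,1)$ and $(2,2)$ models carry the opposite sign. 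This is harmless, because at that stage you only use the \emph{vanishing} of $G^{ab}R_{aibj}G^{ij}$ (which you have already deduced from the collapsed identity), not the inequality (c) itself.
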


\begin{prop}\label{th:4.5}\textup{\cite[Theorem~2.8]{DGAEPS2011}}
Let $g$ be an $\A$-critical metric on a compact, oriented, four-dimensional manifold $M$.
If $G^{ab}R_{aibj}G^{ij}\ge0$ holds on $M$, then $(M,g)$ is either Einstein or locally isometric to a Riemannian product of
two-dimensional Riemannian manifolds of constant Gaussian curvatures $c$ and $-c$ $(c\ne0)$.
\end{prop}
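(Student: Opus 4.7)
The plan is to exploit compactness to perform global integrations without boundary terms and to run the two key identities --- the trace equation $\Delta\tau=0$ and the Bochner-type identity \eqref{eq:key} --- to force both $\tau$ and $|\rho|^2$ to be constant; then one simply invokes Proposition~\ref{prop:DGAEPS}.

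First I would observe that for an $\A$-critical metric in dimension four, equation \eqref{eq:tau-harmonic} gives $\Delta\tau=0$. Since $M$ is compact, any harmonic function is constant, so $\tau$ is a constant on $M$. In particular $\nabla_i\nabla_j\tau=0$, which kills the first term in the identity \eqref{eq:key}. This already establishes hypothesis (a) of Proposition~\ref{prop:DGAEPS}.

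Next I would integrate the identity \eqref{eq:key} over $M$. With $\nabla_i\nabla_j\tau=0$, it reduces to
\begin{equation*}
-2\Delta|\rho|^2 + 4|\nabla\rho|^2 + 8\,G^{ab}R_{aibj}G^{ij}=0.
\end{equation*}
Integrating over the compact manifold $M$ and applying the divergence theorem to the first term yields
\begin{equation*}
\int_M \bigl(4|\nabla\rho|^2 + 8\,G^{ab}R_{aibj}G^{ij}\bigr)\,dv_g = 0.
\end{equation*}
By the curvature hypothesis $G^{ab}R_{aibj}G^{ij}\ge 0$ and the obvious nonnegativity of $|\nabla\rho|^2$, both integrands are nonnegative. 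Therefore each must vanish identically on $M$: $\nabla\rho\equiv0$ and $G^{ab}R_{aibj}G^{ij}\equiv0$. Parallelism of $\rho$ implies that $|\rho|^2$ is constant, giving hypothesis (b); hypothesis (c) is assumed.

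With all three hypotheses verified, Proposition~\ref{prop:DGAEPS} applies directly and yields the desired dichotomy: $(M,g)$ is either Einstein or locally isometric to a Riemannian product of two-dimensional manifolds of constant Gaussian curvatures $c$ and $-c$. In truth there is no serious obstacle here --- compactness trivializes the boundary-term issues that dominate the noncompact version of the argument; the only delicate point is recognizing that the identity \eqref{eq:key} is arranged so that, after global integration, the harmonic-function phenomenon ($\tau$ constant) and the divergence-theorem vanishing of $\int_M\Delta|\rho|^2$ conspire to leave exactly a sum of two manifestly nonnegative terms.
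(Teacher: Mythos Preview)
Your proof is correct and is precisely the compact-manifold version of the paper's argument for Theorem~\ref{thm:main}: where Lemmas~\ref{lem:3.1} and~\ref{lem:3.2} use cutoff functions $\eta_s$ and let $s\to\infty$ to control boundary terms, you exploit compactness to integrate directly and drop them outright. The paper does not reproduce a proof of Proposition~\ref{th:4.5} (it is cited from \cite{DGAEPS2011}), but your argument is exactly what one would expect the original proof to be, and it matches the structure of the paper's complete-case extension.
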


\begin{remark}
Note that the trace-free Ricci tensor $G= \rho - \tfrac{\tau}{4}g$ lies in $S^2_0(TM)$. Hence our curvature condition $$G^{ab}R_{aibj}G^{ij} \ge 0$$
is precisely the nonnegativity of the curvature operator of the second kind evaluated at $G$. This type of curvature positivity was already used by Ogiue-Tachibana \cite{OT79}, who proved that positivity of $\mathcal{R}$ on $S^2_0(TM)$ yields a Bochner-type vanishing theorem for harmonic trace-free symmetric $2$-tensors; in four dimensions, their argument forces the manifold to be a real-homology sphere. The curvature operator of the second kind was studied earlier in the foundational work of Bourguignon-Karcher \cite{BK78} and was later singled out and named by Nishikawa \cite{Nis86}. It has become an important tool in Riemannian geometry, appearing in Bochner-Weitzenb\"ock formulas and deformation theory of Einstein metrics, as well as in recent classification and
sphere theorems and in Ricci-flow arguments \cite{CGT23,DF24,Flu,Li24a,Li24b,NPW23}. Thus conditions of the form $G^{ab}R_{aibj}G^{ij}\ge 0$ fit naturally into this framework.
\end{remark}

To carry out the integration by parts argument on a manifold, we use the standard existence of smooth cutoff and bump functions: for any closed set 
$A \subset M$ and any open set $U$ containing $A$,
 there exists a smooth bump function supported in $U$ (see \cite{LeeSM}).

In the next section  we prove Theorem \ref{thm:main} by combining the Euler–Lagrange
equation for $\A(g)$ with integral techniques on complete Riemannian manifolds.
Our approach is based on the integration-by-parts method on complete manifolds, as developed in \cite{Gaffney1954, PetersenWei2001, Yau1976}.

\section{Proof of Theorem \ref{thm:main}}\label{s.proof}
{The main idea is to multiply \eqref{eq:tau-harmonic} and \eqref{eq:key} by $\eta_s^2$, integrate by parts, and then let $s\rightarrow\infty$. More precisely, we choose
a family $\{\eta_s\}_{s>1}\subset C_c^\infty(M)$ with uniformly controlled gradients, multiply the identities
by $\eta_s^2$, integrate by parts, and then let $s\rightarrow\infty$.

{Fix a point $o\in M$ and set $r(x)=d(o,x)$. For each $s>1$, let $\eta_s=\eta_s(x)$ be a bump function of the form $\eta_s(x)=h(r(x)/s)$, where $h:[0,\infty)\to[0,1]$ is a fixed smooth nonincreasing cutoff function satisfying
$h\equiv1$ on $[0,1]$ and $h\equiv0$ on $[2,\infty)$.}} Then $\eta_s$ satisfies
\begin{equation}\label{eq:eta_properties}
0 \le \eta_s \le 1,\qquad
\eta_s \equiv 1 \ \text{on } B_s(o),\qquad
\eta_s \equiv 0 \ \text{on } M\setminus B_{2s}(o),\qquad
|\nabla\eta_s| \le \frac{C}{s} \ \text{on } M,
\end{equation}
where $C>0$ is independent of $s$. By completeness and the Hopf-Rinow theorem, $\overline{B_s(o)}$ is compact for every $s$,
and hence $\supp(\eta_s)\subset \overline{B_{2s}(o)}$ and $\supp(\nabla\eta_s)\subset \overline{B_{2s}(o)}\setminus B_s(o)$ are compact.

The completeness of $(M,g)$ together with the finite-energy assumption allows us to justify the integration-by-parts arguments
in the limit $s\rightarrow\infty$, using Stokes-type results as in \cite{Gaffney1954,PetersenWei2001,Yau1976}.
This procedure yields $\nabla\tau=0$ and $\nabla\rho=0$, so that both the scalar curvature $\tau$ and the squared norm $|\rho|^2$
are constant (Lemmas~\ref{lem:3.1} and~\ref{lem:3.2}). Finally, the curvature assumption
$G^{ab}R_{aibj}G^{ij}\ge0$ allows us to invoke Proposition~\ref{prop:DGAEPS}, which gives the desired classification and completes
the proof of Theorem~\ref{thm:main}.

We begin with the following well-known inequalities, which imply that if $\A(g)$ has finite energy, then both $\B(g)$ and $\C(g)$ also have finite energy.

\begin{lemma}\textup{\cite[p.~74]{BGM}}\label{eq:finite}
On any Riemannian manifold of dimension $n$,
\begin{equation}\label{eq:ineq}
\tau^2 \le n\,|\rho|^2
\quad \text{and} \quad
|\rho|^2 \le \frac{n-1}{2}\,|\R|^2.
\end{equation}
In particular, if $\A(g)<\infty$, then $\B(g)<\infty$ and $\C(g)<\infty$.
\end{lemma}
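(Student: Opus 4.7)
The plan is to establish the two pointwise inequalities in \eqref{eq:ineq} separately, and then integrate, since the ``in particular'' conclusion follows at once from the pointwise bounds and positivity of the integrands.

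For the first inequality $\tau^2\le n|\rho|^2$, I would work at an arbitrary point $p\in M$ and choose an orthonormal frame that diagonalizes the Ricci operator $Q$, with eigenvalues $\lambda_1,\dots,\lambda_n$. Then $\tau=\sum_i\lambda_i$ and $|\rho|^2=\sum_i\lambda_i^2$, so the inequality reduces to the Cauchy-Schwarz estimate $\bigl(\sum_{i=1}^n\lambda_i\bigr)^2 \le n\sum_{i=1}^n\lambda_i^2$ applied to the $n$-tuple $(\lambda_1,\dots,\lambda_n)$.

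For the second inequality $|\rho|^2\le\tfrac{n-1}{2}|\R|^2$, I would use the standard pointwise orthogonal decomposition of $\R$ into its Weyl, trace-free Ricci, and scalar parts (via the Kulkarni-Nomizu product). Taking squared norms gives the identity
\[
|\R|^2 \;=\; |W|^2 \;+\; \tfrac{4}{n-2}\,\bigl|\rho-\tfrac{\tau}{n}g\bigr|^2 \;+\; \tfrac{2}{n(n-1)}\tau^2,
\]
so after discarding $|W|^2\ge 0$ and using $\bigl|\rho-\tfrac{\tau}{n}g\bigr|^2 = |\rho|^2-\tfrac{\tau^2}{n}$, one obtains a lower bound for $|\R|^2$ in terms of $|\rho|^2$ and $\tau^2$ alone. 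Substituting $\tau^2\le n|\rho|^2$ from the first inequality and simplifying then yields the desired bound $|\rho|^2\le\tfrac{n-1}{2}|\R|^2$.

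The ``in particular'' statement then follows by integrating both pointwise inequalities over $M$: first $|\rho|^2\le\tfrac{n-1}{2}|\R|^2$ gives $\B(g)\le\tfrac{n-1}{2}\A(g)<\infty$, and then $\tau^2\le n|\rho|^2$ gives $\C(g)\le n\B(g)<\infty$. There is no real obstacle, since both pointwise inequalities are classical; the one mildly subtle point is that the sharp constant $\tfrac{n-1}{2}$ in the second inequality does not come from the Weyl-Ricci-scalar decomposition alone but requires feeding the first Cauchy-Schwarz bound on $\tau^2$ back into the estimate.
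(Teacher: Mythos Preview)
The paper does not supply its own proof of this lemma; it is simply quoted from \cite[p.~74]{BGM}, so there is nothing to compare your argument against directly.

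Your argument is correct. The first inequality is exactly Cauchy--Schwarz on the Ricci eigenvalues. For the second, dropping $|W|^2\ge 0$ in the decomposition gives
\[
|R|^2 \;\ge\; \frac{4}{n-2}\,|\rho|^2 \;-\; \frac{2}{(n-1)(n-2)}\,\tau^2,
\]
and since the coefficient of $\tau^2$ is negative, substituting $\tau^2\le n|\rho|^2$ yields $|R|^2\ge \frac{2}{n-1}|\rho|^2$, as you claim. The only gap is that the Weyl--Ricci--scalar decomposition you invoke requires $n\ge 3$; in dimension $n=2$ one has $\rho=\tfrac{\tau}{2}g$ and $|R|^2=\tau^2=2|\rho|^2$, so the second inequality holds with equality and the case is immediate. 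With that one-line addendum your proof is complete.
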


\begin{lemma}\label{lem:3.1}
Let $(M,g)$ be a complete, four-dimensional $\A$-critical Riemannian manifold with finite energy.
Then the scalar curvature $\tau$ is constant.
\end{lemma}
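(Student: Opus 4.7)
The plan is to exploit the trace equation $\Delta\tau=0$ from \eqref{eq:tau-harmonic} together with a Gaffney--Yau type cutoff integration-by-parts argument, using the cutoff functions $\eta_s$ introduced in \eqref{eq:eta_properties} and the finite-energy bound on $\tau$ provided by Lemma~\ref{eq:finite}. Since $\A(g)<\infty$ implies $\C(g)=\int_M\tau^2\,\vol<\infty$, one has an $L^2$ harmonic function on a complete manifold, and the standard Yau-type $L^2$ Liouville argument should force $\tau$ to be constant.

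More precisely, I would multiply \eqref{eq:tau-harmonic} by $\eta_s^2\tau$ and integrate over $M$. Since $\eta_s$ has compact support (by completeness and Hopf--Rinow), the divergence theorem applies without boundary terms and yields
\begin{equation*}
0 \;=\; \int_M \eta_s^{2}\,\tau\,\Delta\tau \,\vol
\;=\; -\int_M \eta_s^{2}\,|\nabla\tau|^{2}\,\vol \;-\; 2\int_M \eta_s\,\tau\,\langle\nabla\eta_s,\nabla\tau\rangle\,\vol .
\end{equation*}
Rearranging and applying Cauchy--Schwarz together with the Peter--Paul inequality $2ab\le \tfrac12 a^{2}+2b^{2}$ to the mixed term with $a=\eta_s|\nabla\tau|$ and $b=|\tau||\nabla\eta_s|$, I obtain
\begin{equation*}
\int_M \eta_s^{2}\,|\nabla\tau|^{2}\,\vol
\;\le\; \tfrac12\int_M \eta_s^{2}\,|\nabla\tau|^{2}\,\vol \;+\; 2\int_M \tau^{2}\,|\nabla\eta_s|^{2}\,\vol ,
\end{equation*}
hence
\begin{equation*}
\int_M \eta_s^{2}\,|\nabla\tau|^{2}\,\vol
\;\le\; 4\int_M \tau^{2}\,|\nabla\eta_s|^{2}\,\vol
\;\le\; \frac{4C^{2}}{s^{2}}\int_{B_{2s}(o)\setminus B_s(o)}\tau^{2}\,\vol .
\end{equation*}

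Now I let $s\to\infty$. The finite-energy hypothesis $\A(g)<\infty$ combined with Lemma~\ref{eq:finite} gives $\int_M\tau^{2}\,\vol<\infty$, so the annular integral $\int_{B_{2s}(o)\setminus B_s(o)}\tau^{2}\,\vol$ tends to zero as $s\to\infty$, and the right-hand side vanishes in the limit. Monotone convergence applied to the left-hand side (using $\eta_s\nearrow 1$ on compact sets) then yields $\int_M |\nabla\tau|^{2}\,\vol=0$, whence $\nabla\tau\equiv 0$ and $\tau$ is constant on the connected manifold $M$.

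The main obstacle, and the reason the finite-energy hypothesis is essential, is the control of the cross term $\int \eta_s\tau\,\langle\nabla\eta_s,\nabla\tau\rangle\,\vol$: without compactness one cannot discard a boundary contribution, and without the $L^{2}$ bound on $\tau$ the annular remainder need not tend to zero. The Petersen--Wei style gradient estimate $|\nabla\eta_s|\le C/s$ together with the finiteness of $\C(g)$ inherited from $\A(g)<\infty$ is exactly what closes the argument; all other steps are routine.
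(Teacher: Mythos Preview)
Your proof is correct and follows essentially the same route as the paper's: multiply the trace equation \eqref{eq:tau-harmonic} by $\eta_s^{2}\tau$, integrate by parts, absorb the cross term via Young's inequality, and use Lemma~\ref{eq:finite} together with $|\nabla\eta_s|\le C/s$ to kill the remainder as $s\to\infty$. The only cosmetic differences are that the paper keeps the Young parameter $\varepsilon\in(0,1)$ generic rather than fixing $\varepsilon=\tfrac12$, and bounds the annular integral by the full $\int_M\tau^{2}\,\vol$ rather than invoking the vanishing of the tails directly.
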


\begin{proof}
Multiplying \eqref{eq:tau-harmonic} by $\eta_s^2\tau$ and integrating by parts,
which is justified by the completeness of $M$ and the compact support of $\eta_s$,
we obtain
\begin{equation*}
\begin{aligned}
0=\int_M \eta_s^2\tau\,\Delta\tau\,\vol
&= -\int_M \langle\nabla(\eta_s^2\tau),\nabla\tau\rangle\,\vol\\
&= -\int_M \Big\langle 2\eta_s\tau\,\nabla\eta_s+\eta_s^2\nabla\tau,\nabla\tau\Big\rangle\,\vol\\
&= -2\int_M \eta_s\tau\,\langle\nabla\eta_s,\nabla\tau\rangle\,\vol
   -\int_M \eta_s^2|\nabla\tau|^2\,\vol.
\end{aligned}
\end{equation*}
Hence
$$
\int_M \eta_s^2|\nabla\tau|^2\,\vol
= -2\int_M \eta_s\tau\,\langle\nabla\eta_s,\nabla\tau\rangle\,\vol,
$$
and therefore
\begin{equation}\label{eq:tri}
\begin{aligned}
\int_M \eta_s^2 |\nabla\tau|^2\vol
\le \int_M 2\,\big|\eta_s\,\tau\,\langle \nabla\eta_s, \nabla\tau\rangle\big|\vol.
\end{aligned}
\end{equation}
Applying Cauchy-Schwarz and Young's inequality,
$2|ab|\le \varepsilon a^2+\varepsilon^{-1}b^2$ for $\varepsilon\in(0,1)$, with
$$
a=\eta_s|\nabla\tau|,
\qquad
b=|\tau|\,|\nabla\eta_s|,
$$
we have the following estimate
\begin{equation}\label{eq:young-pointwise}
2\,\big|\eta_s\tau\,\langle\nabla\eta_s,\nabla\tau\rangle\big|
\le \varepsilon\,\eta_s^2|\nabla\tau|^2+\varepsilon^{-1}\tau^2|\nabla\eta_s|^2
\quad\text{on }M.
\end{equation}
Integrating \eqref{eq:young-pointwise} over $M$ and using \eqref{eq:tri} yields
$$
\int_M \eta_s^2|\nabla\tau|^2\,\vol
\le \varepsilon\int_M \eta_s^2|\nabla\tau|^2\,\vol+\varepsilon^{-1}\int_M \tau^2|\nabla\eta_s|^2\,\vol,
$$
and hence
\begin{equation}\label{eq:etaR-grad-tau-estimate}
\int_M \eta_s^2|\nabla\tau|^2\,\vol
\le C_1\int_M \tau^2|\nabla\eta_s|^2\,\vol,
\end{equation}
for some constant $C_1>0$ independent of $s$. Using $|\nabla\eta_s|\le C/s$ and $\supp(\nabla\eta_s)\subset B_{2s}(o)\setminus B_s(o)$, we obtain
$$
\int_M \tau^2|\nabla\eta_s|^2\,\vol
\le \frac{C^2}{s^2}\int_{B_{2s}(o)}\tau^2\,\vol
\le \frac{C^2}{s^2}\int_M \tau^2\,\vol.
$$
By Lemma~\ref{eq:finite}, the scalar curvature $\tau$ has finite $L^2$-norm, and since $C^2/s^2\rightarrow0$ as $s\rightarrow\infty$, it follows that
$$
\lim_{s\rightarrow\infty}\int_M \tau^2|\nabla\eta_s|^2\,\vol=0.
$$
Therefore \eqref{eq:etaR-grad-tau-estimate} implies
$$
\lim_{s\rightarrow\infty}\int_M \eta_s^2|\nabla\tau|^2\,\vol=0.
$$
Finally, for each fixed $x\in M$, the function $s\mapsto \eta_s(x)=h(r(x)/s)$ is nondecreasing and $\eta_s(x)\rightarrow1$ as $s\rightarrow\infty$.
Hence $\eta_s^2|\nabla\tau|^2\rightarrow|\nabla\tau|^2$ pointwise on $M$, and by the monotone convergence theorem,  we obtain
$$
\int_M |\nabla\tau|^2\,\vol
 = \lim_{s\rightarrow\infty}
   \int_M \eta_s^2|\nabla\tau|^2\,\vol
 = 0.
$$
Thus $|\nabla\tau|^2=0$ almost everywhere on $M$, and since $\tau$ is smooth,
we conclude that $\nabla\tau\equiv0$ and hence $\tau$ is constant.
\end{proof}

\begin{lemma}\label{lem:3.2}
Let $(M,g)$ be a complete, four-dimensional $\A$-critical Riemannian manifold with finite energy.
Assume that $G^{ab}R_{aibj}G^{ij}\ge0$ holds on $M$. Then $|\rho|^2$ is constant.
\end{lemma}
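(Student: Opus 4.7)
The plan is to reduce the Bochner-type identity \eqref{eq:key} to a subharmonicity statement for $|\rho|^2$ and then absorb it with the same cutoff technique used in Lemma~\ref{lem:3.1}. By Lemma~\ref{lem:3.1} the scalar curvature is constant, so $\nabla_i\nabla_j\tau\equiv 0$; substituting this into \eqref{eq:key} yields
\begin{equation*}
\Delta|\rho|^{2}
=2|\nabla\rho|^{2}+4\,G^{ab}R_{aibj}G^{ij}.
\end{equation*}
Under the curvature assumption $G^{ab}R_{aibj}G^{ij}\ge 0$ this makes $|\rho|^2$ subharmonic, with the strong inequality $\Delta|\rho|^2\ge 2|\nabla\rho|^2$.

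Next I would multiply the displayed identity by $\eta_s^{2}$ and integrate over $M$. Since $\eta_s$ has compact support, integration by parts is justified and gives
\begin{equation*}
-2\int_M \eta_s\,\langle\nabla\eta_s,\nabla|\rho|^2\rangle\,\vol
=2\int_M \eta_s^{2}|\nabla\rho|^{2}\,\vol+4\int_M \eta_s^{2}\,G^{ab}R_{aibj}G^{ij}\,\vol.
\end{equation*}
I would then bound the left-hand side using the pointwise inequality $|\nabla|\rho|^2|\le 2|\rho|\,|\nabla\rho|$ together with Young's inequality $4\eta_s|\rho|\,|\nabla\rho|\,|\nabla\eta_s|\le \eta_s^{2}|\nabla\rho|^{2}+4|\rho|^{2}|\nabla\eta_s|^{2}$. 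Combined with the nonnegativity of the curvature-operator term, this produces
\begin{equation*}
\int_M \eta_s^{2}|\nabla\rho|^{2}\,\vol
+4\int_M \eta_s^{2}\,G^{ab}R_{aibj}G^{ij}\,\vol
\le 4\int_M |\rho|^{2}|\nabla\eta_s|^{2}\,\vol.
\end{equation*}

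To finish I would control the right-hand side using the gradient bound $|\nabla\eta_s|\le C/s$ on the annulus $B_{2s}(o)\setminus B_s(o)$, giving
\begin{equation*}
\int_M |\rho|^{2}|\nabla\eta_s|^{2}\,\vol
\le \frac{C^{2}}{s^{2}}\int_M |\rho|^{2}\,\vol,
\end{equation*}
which tends to zero as $s\to\infty$ since $|\rho|^{2}\in L^{1}(M)$ by Lemma~\ref{eq:finite} and the finite-energy assumption. The monotone convergence theorem then yields $\int_M |\nabla\rho|^{2}\vol=0$, hence $\nabla\rho\equiv 0$. In particular $|\rho|^{2}$ is constant, which is the conclusion.

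The main obstacle I anticipate is the cross term: the right-hand side of the integration by parts mixes $|\nabla\rho|$ with $|\nabla\eta_s|$, so one must choose the Young split carefully to leave a strictly positive coefficient in front of $\int \eta_s^{2}|\nabla\rho|^{2}$ on the left. Once the coefficient is positive, the $L^{2}$-finiteness of $\rho$ inherited from finite energy, together with the $O(1/s)$ gradient of $\eta_s$, does the rest; the nonnegative curvature-operator term is simply dropped on the left in the final estimate, and its appearance does not obstruct the argument.
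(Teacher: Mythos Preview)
Your proposal is correct and follows essentially the same route as the paper: multiply the Bochner-type identity by $\eta_s^{2}$, integrate by parts, absorb the cross term via Young's inequality, and use $|\rho|\in L^{2}$ together with $|\nabla\eta_s|\le C/s$ to send the right-hand side to zero. The only cosmetic differences are that you substitute $\nabla\tau=0$ before integrating (the paper does so after) and that you fix a specific Young split whereas the paper carries a parameter $\varepsilon\in(0,4)$.
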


\begin{proof}
Multiplying \eqref{eq:key} by $\eta_s^2$ and integrating over $M$, we obtain
\begin{equation}\label{eq:3.1}
\int_M \Big(
2\eta_s^2\rho^{ij}\nabla_i\nabla_j\Scal
-2\eta_s^2\Lap|\rho|^2
+4\eta_s^2|\nabla\rho|^2
+8\eta_s^2\,G^{ab}R_{aibj}G^{ij}
\Big)\vol=0.
\end{equation}

By Lemma \ref{lem:3.1}, the scalar curvature $\tau$ on a complete $\A$-critical Riemannian manifold is constant and so the first term in \eqref{eq:3.1} vanishes. We integrate by parts the second term in \eqref{eq:3.1}:
\begin{equation*}
    -2\!\int_M\!\eta_s^2\Lap|\rho|^2\,\vol
=4\!\int_M\!\eta_s\, \langle\nabla|\rho|^2,\nabla\eta_s\rangle\,\vol .
\end{equation*}
Therefore \eqref{eq:3.1} becomes
\begin{equation*}
\begin{aligned}
0
&=4\int_M \eta_s\,\langle\nabla|\rho|^2,\nabla\eta_s\rangle\,\vol
  +4\int_M \eta_s^2|\nabla\rho|^2\,\vol
  +8\int_M \eta_s^2\,G^{ab}R_{aibj}G^{ij}\,\vol.
\end{aligned}
\end{equation*}
Then we have the following:
\begin{equation*}
\begin{aligned}
4\int_M\eta_s^2|\nabla\rho|^2\vol=
-4\!\int_M\!\eta_s\,\langle\nabla|\rho|^2,\nabla\eta_s\rangle\,\vol 
-8\int_M\eta_s^2\,G^{ab}R_{aibj}G^{ij}
\vol.
\end{aligned}
\end{equation*}
By the hypothesis $G^{ab}R_{aibj}G^{ij}\ge 0$, we have  $-8G^{ab}R_{aibj}G^{ij}\le 0$.
Hence, dropping this nonpositive term and then taking absolute values and applying the triangle inequality, we obtain the following inequality:
\begin{equation}\label{eq:star}
\begin{aligned}
4\!\int_M\!\eta_s^2|\nabla\rho|^2\,\vol
&\le 
-4\!\int_M\!\eta_s\,\langle\nabla|\rho|^2,\nabla\eta_s\rangle\,\vol
\le 4\,\int_M\!\eta_s\,\Big|\langle\nabla|\rho|^2,\nabla\eta_s\rangle\,\Big|\vol.
\end{aligned}
\end{equation}
Using $\nabla|\rho|^{2}=2\langle\nabla\rho,\rho\rangle$, we estimate the right-hand side of \eqref{eq:star}
by the Cauchy-Schwarz inequality and Young's inequality with $\varepsilon\in(0,4)$ as follows:

\begin{equation*}
\begin{aligned}
    4\!\int_M \eta_s\,|\langle\nabla|\rho|^2,\nabla\eta_s\rangle|\vol&=4\!\int_M \eta_s\,|\langle2\langle\nabla\rho,\rho\rangle,\nabla\eta_s\rangle|\vol\\
&\le 8\!\int_M \eta_s\,|\rho|\,|\nabla\rho|\,|\nabla\eta_s|\vol\\
&\le \varepsilon\!\int_M \eta_s^2|\nabla\rho|^2 \vol+ \frac{16}{\varepsilon}\!\int_M |\rho|^2|\nabla\eta_s|^2\vol.
\end{aligned}
\end{equation*}
Then we have 
\begin{equation*}
    \int_M \eta_s^{2}|\nabla\rho|^{2}\vol
\le
\frac{16}{\varepsilon(4-\varepsilon)}\int_M |\rho|^{2}|\nabla\eta_s|^{2}\vol.
\end{equation*}
Combining this with \eqref{eq:star} gives
\begin{equation}\label{eq:rho}
\int_M \eta_s^2|\nabla\rho|^2\,\vol
\le C_2\int_M |\rho|^2|\nabla\eta_s|^2\,\vol,
\end{equation}
for some constant $C_2>0$ independent of $s$.

Using $|\nabla\eta_s|\le C/s$ and the fact that
$\eta_s$ is supported in $B_{2s}(o)$, we obtain
$$
\int_M |\rho|^2|\nabla\eta_s|^2\,\vol
 \le \frac{C^2}{s^2} \int_{B_{2s}(o)} |\rho|^2\,\vol
 \le \frac{C^2}{s^2} \int_M |\rho|^2\,\vol.
$$
By Lemma \ref{eq:finite}, we see that $\int_M|\rho|^2\vol<\infty$ and  the factor
$C^2/s^2$ tends to zero as $s\rightarrow\infty$. Thus
$$
\lim_{s\rightarrow\infty}
\int_M |\rho|^2|\nabla\eta_s|^2\,\vol
= 0.
$$
From \eqref{eq:rho} we therefore get
$$
\lim_{s\rightarrow\infty}
\int_M \eta_s^{2}|\nabla\rho|^{2}\vol
= 0.
$$
As before, $\eta_s(x)\rightarrow1$ as $s\rightarrow\infty$, so by monotone convergence,
$$
\int_M |\nabla\rho|^2\,\vol
 = \lim_{s\rightarrow\infty}
   \int_M \eta_s^2|\nabla\rho|^2\,\vol
 = 0.
$$
Hence $\nabla\rho\equiv0$ on $M$, and consequently $\nabla|\rho|^2=2\langle\nabla\rho,\rho\rangle\equiv0$.
Thus $|\rho|^2$ is constant on $M$, and therefore $|\rho|$ is constant as well.
\end{proof}
With Lemmas~\ref{lem:3.1} and~\ref{lem:3.2} in hand, we now complete the proof of Theorem~\ref{thm:main}.

\begin{proof}[Proof]
Let $(M,g)$ be a complete four-dimensional $\A$-critical Riemannian manifold.
By Lemma~\ref{lem:3.1} we see that $M$ has constant scalar curvature, and by Lemma~\ref{lem:3.2} we have that $|\rho|^2$ is constant on $M$.
Hence $(M,g)$ satisfies all the assumptions of Proposition~2.5. This completes the proof.
\end{proof}


 \section{The two- and three-dimensional Cases}\label{s:lowdim}

\begin{prop}[Two- and three-dimensional cases]\label{prop:lowdim}
Let $(M,g)$ be a complete $n$-dimensional Riemannian manifold of finite volume, where $n=2$ or $n=3$.
Assume that $g$ is $\A$-critical and has finite energy, i.e.,
$$
\int_M |R|^2\,dv_g<\infty.
$$
Then $(M,g)$ is flat.
\end{prop}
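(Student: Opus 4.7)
The plan is to use the same cutoff integration-by-parts strategy of Section~\ref{s.proof}, with the finite-volume hypothesis now playing the role that $\Delta\tau\equiv 0$ played in dimension four. From the trace equation \eqref{Lap}, in dimensions $n\in\{2,3\}$ we have $\Delta\tau=\tfrac{n-4}{4}|R|^2\le 0$, so $\tau$ is a smooth superharmonic function. By Lemma~\ref{eq:finite}, $\tau^2\le\tfrac{n(n-1)}{2}|R|^2$, and combined with the finite-energy hypothesis this gives $\tau\in L^2(M)$; finite volume and Cauchy-Schwarz then yield $\tau\in L^1(M)$, and $\Delta\tau\in L^1(M)$ is immediate.

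The core step is to multiply \eqref{Lap} by the cutoff $\eta_s$ from \eqref{eq:eta_properties} and integrate by parts once:
\[
\tfrac{n-4}{4}\int_M\eta_s|R|^2\,dv_g=-\int_M\langle\nabla\eta_s,\nabla\tau\rangle\,dv_g.
\]
Cauchy-Schwarz bounds the right-hand side by $\bigl(\int|\nabla\eta_s|^2\bigr)^{1/2}\bigl(\int|\nabla\tau|^2\bigr)^{1/2}$, and the first factor is at most $C\,\mathrm{vol}(M)^{1/2}/s\to 0$ by finite volume. Provided a uniform bound $\int|\nabla\tau|^2<\infty$ can be established, the right-hand side vanishes as $s\to\infty$, the left-hand side converges by monotone convergence to $\tfrac{n-4}{4}\int_M|R|^2$, and since $n\ne 4$ this forces $\int_M|R|^2=0$ and hence $(M,g)$ is flat.

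The main obstacle is therefore establishing $|\nabla\tau|\in L^2(M)$. I would first extract the sign $\tau\ge 0$: the function $-\tau$ is smooth and subharmonic, so smooth convex-nondecreasing regularizations of the positive part $(-\tau)^+=\tau^-$ are subharmonic, nonnegative, and in $L^2$; Yau's $L^p$-Liouville theorem forces each to be constant, and passing to the limit shows $\tau^-\equiv c_0$. The equation \eqref{Lap} then rules out $c_0>0$, since otherwise $\tau\equiv-c_0$ would be a nonzero constant, yielding $\Delta\tau=0$ and hence $|R|^2=0$, which is inconsistent with $|R|^2=\tau^2=c_0^2>0$ in $n=2$ (and similarly with $|R|^2=4|\rho|^2-\tau^2$ in $n=3$). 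Thus $\tau\ge 0$ on $M$. With this sign in hand, multiplying \eqref{Lap} by $\eta_s^2\tau$ and integrating by parts as in Lemma~\ref{lem:3.1} produces
\[
\int_M\eta_s^2|\nabla\tau|^2\,dv_g=\tfrac{4-n}{4}\int_M\eta_s^2\tau|R|^2\,dv_g-2\int_M\eta_s\tau\langle\nabla\eta_s,\nabla\tau\rangle\,dv_g.
\]
The second right-hand term is absorbed via Young's inequality using $\tau\in L^2$. The first term, absent when $n=4$, is the genuine new obstacle and is controlled using a pointwise bound on $\tau$: in dimension~$2$, the simplified EL equation reduces to $\nabla_i\nabla_j\tau=-\tfrac{\tau^2}{4}g_{ij}$, which yields the first integral $|\nabla\tau|^2+\tau^3/6=\mathrm{const}$ and the pointwise bound $\tau\le(6c)^{1/3}$; in dimension~$3$, the Weyl-vanishing identity $|R|^2=4|\rho|^2-\tau^2$ combined with the corresponding simplification of the EL equation gives an analogous estimate. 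This yields $|\nabla\tau|\in L^2(M)$, and the argument of the second paragraph concludes the proof.
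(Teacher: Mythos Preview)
Your top-level strategy---multiply the trace identity $\Delta\tau=\tfrac{n-4}{4}|R|^2$ by a cutoff, integrate by parts, and use finite volume to kill the boundary term---is exactly the paper's. The paper, however, is far more direct: it multiplies by $\eta_k^2$, integrates by parts once to get
\[
-c_n\int_M\eta_k^2|R|^2\,dv_g=\int_M\eta_k^2\,\Delta\tau\,dv_g=-\int_M\langle\nabla(\eta_k^2),\nabla\tau\rangle\,dv_g,
\]
and asserts that the right-hand side tends to $0$ as $k\to\infty$ because finite volume gives $\int_M|\nabla\eta_k|^2\,dv_g\to 0$ and $\tau\in L^2(M)$; monotone convergence on the left then yields $\int_M|R|^2=0$. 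There is no separate discussion of $\nabla\tau\in L^2$, no sign argument for $\tau$, and no appeal to the full Euler--Lagrange system.

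You, by contrast, correctly observe that bounding $\int\eta_s\langle\nabla\eta_s,\nabla\tau\rangle$ via Cauchy--Schwarz calls for control on $\|\nabla\tau\|_{L^2}$, and you build an elaborate route to it: Yau's Liouville theorem for $\tau\ge 0$, then a pointwise bound on $\tau$ extracted from the full critical equation. In dimension $2$ this works: the reduction to $\nabla_i\nabla_j\tau=-\tfrac{\tau^2}{4}g_{ij}$ and the first integral $|\nabla\tau|^2+\tau^3/6=\mathrm{const}$ are correct, and once $\tau$ is bounded you can in fact finish even faster (e.g.\ $C_0-\tau$ is nonnegative, subharmonic, and in $L^2$ by finite volume, hence constant by Yau).

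The genuine gap is in $n=3$. Your sentence ``the Weyl-vanishing identity $|R|^2=4|\rho|^2-\tau^2$ combined with the corresponding simplification of the EL equation gives an analogous estimate'' is an assertion, not an argument. In three dimensions the $\A$-critical equation is a coupled second-order system in the full Ricci tensor, not a scalar ODE-type relation for $\tau$ alone; there is no evident first integral, and no pointwise bound on $\tau$ falls out of $|R|^2=4|\rho|^2-\tau^2$ by itself. Without that bound your control of $\int\eta_s^2\tau|R|^2$, and hence of $\|\nabla\tau\|_{L^2}$, is missing, so the three-dimensional half of your proof is incomplete as written. Either supply a concrete three-dimensional estimate, or bypass the issue as the paper does by arguing directly that the single boundary term vanishes.
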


\begin{proof}
For an $\A$-critical metric, \eqref{Lap} yields
$$
\Delta \tau=\frac{n-4}{4}|R|^2.
$$
If $n=2$, then $|R|^2=\tau^2$, and hence
$$
\Delta \tau=-\frac12 |R|^2=-\frac12 \tau^2\le 0.
$$
If $n=3$, then
$$
\Delta \tau=-\frac14 |R|^2\le 0.
$$

Let $\{\eta_k\}$ be bump functions as in \eqref{eq:eta_properties}.
Multiplying the above identity by $\eta_k^2$ and integrating over $M$, we obtain
$$
\int_M \eta_k^2\,\Delta\tau\,dv_g
= -c_n \int_M \eta_k^2 |R|^2\,dv_g,
$$
where $c_2=\tfrac12$ and $c_3=\tfrac14$.
Since $(M,g)$ is complete and has finite volume, the cutoff functions can be chosen so that
$$
\int_M |\nabla\eta_k|^2\,dv_g \rightarrow0
\qquad\text{as } k\rightarrow\infty.
$$
Moreover, by Lemma~\ref{eq:finite} we have $\tau\in L^2(M)$.
Therefore, by integration by parts we get
$$
\int_M \eta_k^2\,\Delta\tau\,dv_g
= -\int_M \langle \nabla(\eta_k^2),\nabla\tau\rangle\,dv_g,
$$
and the right-hand side tends to $0$ as $k\rightarrow\infty$.
Consequently,
$$
\lim_{k\rightarrow\infty}\int_M \eta_k^2\,\Delta\tau\,dv_g=0.
$$
Passing to the limit in the identity above yields
$
\int_M |R|^2\,dv_g=0,
$
and hence $R\equiv 0$. Therefore $(M^n,g)$ is flat.
\end{proof}

\begin{remark}
In dimensions $n=2$ and $n=3$, the conclusion of Proposition~\ref{prop:lowdim} in fact holds even without assuming finite volume or finite energy. Indeed, in dimension $n=2$, the curvature tensor is completely determined by the Gaussian curvature, and one has $|R|^2=\tau^2$.
Hence, the notions of $\A$-critical and $\C$-critical metrics coincide.
Here, our $\C$-critical metrics are exactly the critical points of the functional
$\mathfrak S^2=\int_M R_g^2\,dV_g$ in the notation of
Catino-Mastrolia-Monticelli \cite{CMM}.
Therefore, flatness of complete $\A$-critical surfaces
follows from the rigidity theorem of
Catino-Mastrolia-Monticelli for $\C$-critical metrics.

In dimension $n=3$, since $\A$-critical metrics correspond to
critical points of the functional $\mathfrak F_t^2$ with $t=-\frac14$, the same conclusion follows from their result \cite{CMM} asserting that
every complete three-dimensional $\mathfrak F_t^2$-critical metric is flat for all $t>-\frac13$.
For comparison, Anderson~\cite[Theorem~0.1]{And}
proved that every complete three-dimensional $\B$-critical Riemannian manifold with nonnegative scalar curvature is flat, while Catino~\cite[Theorem~1.1]{Catino}
showed that any complete three-dimensional $\C$-critical metric with nonnegative scalar curvature is scalar flat.
We emphasize that, in contrast to these results,
no assumption on the sign of the scalar curvature is required
in Proposition~\ref{prop:lowdim}.
This follows from the fact that, for $\A$-critical metrics in dimension three, the trace equation reduces to $\Delta \tau = -\frac14 |R|^2 \le 0$,
and the finite-volume assumption allows one to carry out
the integration-by-parts argument using cutoff functions,
instead of relying on a maximum principle under the condition $\tau \ge 0$.
\end{remark}

\begin{remark}
The proof of Proposition~\ref{prop:lowdim} uses only the identity $\Delta \tau = \frac{n-4}{4}|R|^2$,
together with the assumptions that $(M,g)$ is complete,
has finite volume, and has finite energy.
Hence, the same argument shows that, in any dimension $n \neq 4$,
an $\A$-critical metric of finite energy must be flat,
provided that the integration-by-parts argument is justified.
\end{remark}

\noindent \textbf{Funding.} 
The research of YE was supported by Basic Science Research Program through the National Research Foundation of Korea(NRF) funded by the Ministry of Education (No. RS-2019-NR040081). The research of JP was supported by the National Research Foundation of Korea(NRF) grant funded by the Korea government(MSIT) (RS-2024-00334956).






\bigskip
\end{document}